\crefname{assumption}{Assumption}{Assumptions}
\DeclareMathOperator*{\dom}{\mathcal{D}}
\DeclareMathOperator*{\graph}{\mathcal{G}}
\DeclareMathOperator*{\Blim}{\textup{B-lim}}
\newcommand*{\bK}{\mathbb{K}}
\newcommand*{\cF}{\mathcal{F}}
\newcommand*{\cP}{\mathcal{P}}
\newcommand*{\cQ}{\mathcal{Q}}
\newcommand*{\Complex}{\mathbb{C}}
\newcommand*{\defeq}{\coloneqq}
\newcommand*{\defterm}{\textbf}
\renewcommand*{\epsilon}{\varepsilon}
\newcommand*{\Naturals}{\mathbb{N}}
\newcommand*{\one}{\mathds{1}}
\newcommand*{\rd}{\mathrm{d}}
\newcommand*{\Reals}{\mathbb{R}}
\renewcommand*{\leq}{\leqslant}
\renewcommand*{\mathbf}{\boldsymbol}
\newcommand{\todo}[1]{\bgroup\color{red}TODO:~#1\egroup}
\theoremstyle{plain}
\newtheorem{theorem}{\sffamily Theorem}[section]
\newtheorem{lemma}[theorem]{\sffamily Lemma}
\newtheorem{corollary}[theorem]{\sffamily Corollary}
\theoremstyle{definition}
\newtheorem{remark}[theorem]{\sffamily Remark}
\newcommand{\set}[2]{\{ #1 \mid #2 \}}
\numberwithin{equation}{section}
\numberwithin{figure}{section}
\numberwithin{table}{section}
\DeclareAcronym{LCS}{short=LCS, long=locally convex space}
\newcommand*{\arXiv}[1]{\bgroup\color{blue}\href{https://arxiv.org/abs/#1}{arXiv:#1}\egroup}
\newcommand*{\doi}[1]{\bgroup\color{blue}\href{https://doi.org/#1}{doi:#1}\egroup}
\newcommand*{\email}[1]{\bgroup\color{blue}\href{mailto:#1}{#1}\egroup}
\renewcommand*{\url}[1]{\bgroup\color{blue}\href{#1}{#1}\egroup}
\setlist[enumerate]{nosep}
\setlist[itemize]{nosep}
\newcommand{\proofheadfont}{\bfseries\sffamily}
\xpatchcmd{\proof}{\itshape}{\proofheadfont}{}{}
\let\oldtitle\title
\renewcommand{\title}[1]{\oldtitle{#1}\newcommand{\theshorttitle}{#1}}
\newcommand{\shorttitle}[1]{\renewcommand{\theshorttitle}{#1}}
\let\oldauthor\author
\renewcommand{\author}[1]{\oldauthor{#1}\newcommand{\theshortauthor}{#1}}
\newcommand{\shortauthor}[1]{\renewcommand{\theshortauthor}{#1}}
\newcommand{\theabstract}[1]{\par\bgroup\noindent\textbf{\textsf{Abstract.}} #1\egroup}
\newcommand{\thekeywords}[1]{\par\smallskip\bgroup\noindent\textbf{\textsf{Keywords.}}\newcommand{\and}{ $\bullet$ } #1\egroup}
\newcommand{\themsc}[1]{\par\smallskip\bgroup\noindent\textbf{\textsf{2020 Mathematics Subject Classification.}}\newcommand{\and}{ $\bullet$ } #1\egroup}
\newcommand*{\affilref}[1]{\ref{affiliation#1}}
\newcommand*{\affiliation}[3]{
	\footnotetext[#1]{\label{affiliation#2}#3}
}
\title{Hille's theorem for Bochner integrals of functions with values in locally convex spaces}
\shorttitle{Hille's theorem in locally convex spaces}
\author{
	T.~J.~Sullivan\textsuperscript{\affilref{Warwick}}
}
\date{\today}
\begin{document}
\maketitle
\affiliation{1}{Warwick}{Mathematics Institute and School of Engineering, University of Warwick, Coventry, CV4 7AL, United Kingdom\newline (\email{t.j.sullivan@warwick.ac.uk})}

\begin{abstract}\small
	\theabstract{Hille's theorem is a powerful classical result in vector measure theory.
It asserts that the application of a closed, unbounded linear operator commutes with strong/Bochner integration of functions taking values in a Banach space.
This note shows that Hille's theorem also holds in the setting of complete locally convex spaces.
}
	\thekeywords{Bochner integral%
\and%
closed operator%
\and%
Hille's theorem%
\and%
locally convex space%
\and%
strong integral%
\and%
unbounded operator%
}
	\themsc{28B05
\and%
28C20
\and%
46G10
}
\end{abstract}

\section{Introduction}

The strong or Bochner integral of a function taking values in a Banach space $V$ is a long-established mathematical concept \citep{Bochner1935}, as are various weaker integrals such as the $V''$-valued Dunford integral \citep{Dunford1937,Dunford1938} and the $V$-valued weak and Gel$'$fand--Pettis integrals (\citealp{Gelfand1936}; \citealp{Pettis1938}).
The extension of the weaker integrals to integrands taking values in more general topological vector spaces is relatively straightforward, since those integrals are characterised though the application of continuous linear functionals.
The strong integral can also be generalised to \acp{LCS} $V$, as has been done by e.g.\ \citet{Blondia1981}.
Integrability by seminorm (\citealp{GarnirDeWildeSchmets1972}; \citealp{Blondia1981}) is intermediate between the strong and weak integrals, although it agrees with the strong/Bochner integral in the Banach case.

One of the key features of the strong/Bochner integral, in the Banach case, is Hille's theorem \citep[p.83]{HillePhillips1957}, which asserts that integration commutes not merely with bounded/continuous linear operators, but even with \emph{closed} unbounded operators.
This non-trivial fact has no analogue for the weak integral --- q.v.\ standard counterexamples to the principle of differentiation under the integral sign when the domination hypothesis fails \citep{Talvila2001b} --- and is of vital importance in, e.g., understanding the mean and covariance structure of linear images of stochastic processes under differential operators.
This note proves the following Hille-type theorem for the Bochner integral of a function taking values in an LCS, which does not appear to have been explicitly noted in the literature hitherto.

\begin{theorem}[Hille's theorem for the Bochner integral in an \ac{LCS}]
	\label{thm:Hille}
	Let $(\Omega, \cF, \mu)$ be a finite measure space and let $V$ and $W$ be complete \acp{LCS} over the same field $\bK = \Reals$ or $\Complex$.
	Let $T \colon \dom(T) \subseteq V \to W$ be a closed operator defined on a linear subspace $\dom(T)$ of $V$ and suppose that both $u \colon \Omega \to V$ and $T u \colon \Omega \to W$ are Bochner integrable with respect to $\mu$.
	Then, for every $A \in \cF$,
	\begin{align}
		\label{eq:Hille}
		\tag{\textup{H}}
		\int_{A} u \, \rd \mu \in \dom(T)
		\quad
		\text{and}
		\quad
		\int_{A} T u \, \rd \mu = T \int_{A} u \, \rd \mu .
	\end{align}
\end{theorem}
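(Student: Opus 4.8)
The plan is to reduce the statement to the single fact that the Bochner integral of a function taking values in a closed linear subspace again lies in that subspace, applied to the graph of $T$ inside the product space $V \times W$.

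First I would equip $V \times W$ with the product topology, under which it is again a complete \ac{LCS} and every continuous seminorm is dominated by one of the form $p \oplus q \colon (x,y) \mapsto p(x) + q(y)$, where $p$ and $q$ range over the continuous seminorms on $V$ and $W$ respectively. The closedness of $T$ says precisely that its graph $\gr(T) = \set{(x, Tx)}{x \in \dom(T)}$ is a closed linear subspace of $V \times W$. Since $Tu(\omega)$ is defined for $\mu$-almost every $\omega$, we have $u(\omega) \in \dom(T)$ there, so the map $\tilde u \defeq (u, Tu) \colon \Omega \to V \times W$ takes values in $\gr(T)$ almost everywhere by construction.

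Second, I would verify that $\tilde u$ is Bochner integrable into $V \times W$. Strong measurability is inherited componentwise from that of $u$ and $Tu$, since an almost-everywhere limit of simple functions in each factor yields one in the product. The seminorm-integrability condition then follows from $\int_\Omega (p \oplus q)(\tilde u) \, \rd\mu \le \int_\Omega p(u) \, \rd\mu + \int_\Omega q(Tu) \, \rd\mu < \infty$. Because the coordinate projections $\pi_V$ and $\pi_W$ are continuous and linear, commutation of the Bochner integral with continuous linear maps gives $\int_A \tilde u \, \rd\mu = \bigl( \int_A u \, \rd\mu, \int_A Tu \, \rd\mu \bigr)$ for every $A \in \cF$.

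Third --- and this is the crux --- I would show that $\int_A \tilde u \, \rd\mu \in \gr(T)$. As $V \times W$ is a Hausdorff \ac{LCS} (Hausdorffness being in any case needed for uniqueness of the integral), the Hahn--Banach theorem guarantees that the closed subspace $\gr(T)$ is the intersection of the kernels of all continuous linear functionals $\Phi$ on $V \times W$ that vanish on $\gr(T)$. For each such $\Phi$, commutation with the integral for scalar-valued integrands yields $\Phi\bigl( \int_A \tilde u \, \rd\mu \bigr) = \int_A \Phi(\tilde u) \, \rd\mu = 0$, since $\tilde u(\omega) \in \gr(T) \subseteq \ker \Phi$ for almost every $\omega$. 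Hence the integral lies in every such kernel, and therefore in $\gr(T)$. Unwinding the membership $\bigl( \int_A u \, \rd\mu, \int_A Tu \, \rd\mu \bigr) \in \gr(T)$ then says exactly that $\int_A u \, \rd\mu \in \dom(T)$ and $T \int_A u \, \rd\mu = \int_A Tu \, \rd\mu$, which is \eqref{eq:Hille}. I expect the third step to be the main obstacle: the invariance of the integral under closed subspaces is classical in the Banach case, but in the \ac{LCS} setting one must ensure both that the separation argument is available and that commutation of the Bochner integral with continuous linear functionals has been secured for the notion of integrability in force.
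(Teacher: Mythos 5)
Your proposal is correct in its core idea, and it takes a genuinely different route from the paper's. The paper never invokes duality: it upgrades the Bochner-approximating sequences to a net of $\sigma$-simple, $\graph(T)$-valued functions converging \emph{essentially uniformly} to $(u, Tu)$ (\Cref{lem:essential_separability_implies_uniform_countable_approx,cor:ess_unif_sigma_simple_approx_of_strongly_measurable}), proves \eqref{eq:Hille} for each such approximant by applying the closedness of $T$ to its simple partial sums, and then applies closedness once more to pass to the limit in the net. You instead reduce everything to a single lemma --- a Bochner-integrable function taking values a.e.\ in a closed linear subspace has its integral in that subspace --- proved by Hahn--Banach separation plus the weak commutation $\Phi \bigl( \int_{A} (u, Tu) \, \rd \mu \bigr) = \int_{A} \Phi (u, Tu) \, \rd \mu$. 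Both ingredients are sound in a Hausdorff \ac{LCS}: the separation corollary (a closed subspace equals the intersection of the kernels of the continuous functionals annihilating it) is exactly where local convexity enters your argument, and weak commutation follows from the definition of the integral as a limit of integrals of simple functions, since $\absval{\Phi} \leq C r$ for some continuous seminorm $r$ on $V \oplus W$. Your route is shorter and arguably more conceptual; the paper's route is duality-free (its key approximation lemma is even stated for general uniform spaces) and produces intermediate results of independent interest, at the cost of the $\sigma$-simple machinery.

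One step of your write-up needs repair, though the repair is immediate. In your second step you deduce Bochner integrability of $(u, Tu)$ from strong measurability together with integral boundedness. That implication is Bochner's theorem for Banach spaces; with the paper's definition of Bochner integrability in an \ac{LCS} --- a \emph{single} sequence of simple functions $(s_{n})_{n}$ with $p(s_{n} - u) \to 0$ a.e.\ and in the integral sense for \emph{every} continuous seminorm $p$ --- it is not among the facts established, and the standard Banach-space proof breaks down because the truncation set in the dominated-convergence argument depends on the seminorm; this is precisely the distinction between Bochner integrability and integrability by seminorm discussed in \Cref{sec:by_seminorm}. Fortunately you do not need it: take the Bochner-approximating sequences $(s_{n})_{n}$ for $u$ and $(t_{n})_{n}$ for $Tu$ supplied by the hypotheses; then $(s_{n}, t_{n})$ is simple, $\Blim_{n} (s_{n}, t_{n}) = (u, Tu)$ with respect to the seminorms \eqref{eq:graph_seminorms_V_oplus_W}, and $\int_{A} (s_{n}, t_{n}) \, \rd \mu = \bigl( \int_{A} s_{n} \, \rd \mu , \int_{A} t_{n} \, \rd \mu \bigr) \to \bigl( \int_{A} u \, \rd \mu , \int_{A} Tu \, \rd \mu \bigr)$, which gives at once that $(u, Tu)$ is Bochner integrable with integral $\bigl( \int_{A} u \, \rd \mu , \int_{A} Tu \, \rd \mu \bigr)$ --- no appeal to a measurability-plus-boundedness criterion, to completeness of the product, or to the coordinate projections is needed. (This pairing is exactly how the paper opens its own Step~1.) With that substitution, your argument goes through.
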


The terms and concepts used in the statement of \Cref{thm:Hille} are given in \Cref{sec:background}, and a pedagogically-paced proof of \Cref{thm:Hille} is given in \Cref{sec:proof}.
\Cref{sec:by_seminorm} discusses how the proof of Hille's theorem fails for the next-strongest notion of integral, namely integration by seminorm.

\section{Background and notation}
\label{sec:background}

\paragraph{Locally convex spaces.}
Throughout, $V$ and $W$ will denote \aclp{LCS} over the same field $\bK = \Reals$ or $\mathbb{C}$;
let $\cP$ and $\cQ$ denote collections of continuous seminorms $p \colon V \to \Reals$ and $q \colon W \to \Reals$ that generate the topologies of $V$ and $W$ respectively.
The topological dual space of $V$, comprised of all continuous linear functionals from $V$ into $\bK$, is denoted by $V'$.
As is well known, $V$ is first countable if and only if $\cP$ can be taken to be a finite or countable collection $\{ p_{n} \}_{n \in \mathcal{N}}$, $\mathcal{N} \subseteq \Naturals$, and it is precisely in this situation that $V$ is pseudometrisable, with
\begin{align}
	\label{eq:pseudometric}
	d(v, v') \defeq \sum_{n \in \mathcal{N}} 2^{-n} \frac{p_{n}(v - v')}{1 + p_{n}(v - v')}
\end{align}
being a translation-invariant pseudometric\footnote{I.e.\ $d \colon V \times V \to [0, \infty)$ satisfies all the requirements to be a metric with the exception that $d(v, v') = 0$ need not imply that $v = v'$.} that generates the topology of $V$.
The space $V$ is Hausdorff precisely when $\cP$ is a separating family of seminorms, i.e.\ when, for each $v \in V \setminus \{ 0 \}$, there is some $p \in \cP$ with $p(v) > 0$;
thus, $V$ is metrisable if and only if it admits a countable and separating family $\cP$.
If $V$ is completely metrisable, then it is called a \defterm{Fr\'echet space}, and its topology is induced by the translation-invariant metric \eqref{eq:pseudometric}.

For simplicity, it will be assumed henceforth that $V$ and $W$ are both Hausdorff;
if this assumption fails, then certain statements such as \eqref{eq:Hille} about quantities being equal would have to be amended to state that they are topologically indistinguishable.

\paragraph{Closed operators.}
A linear operator $T \colon \dom(T) \subseteq V \to W$ between $V$ and $W$ is called a \defterm{closed operator} if its \defterm{graph} $\graph(T) \defeq \set{ (v, T v) \in V \oplus W }{ v \in \dom(T) }$ is a closed subspace of $V \oplus W$ when it is equipped with the family of seminorms
\begin{align}
	\label{eq:graph_seminorms_V_oplus_W}
	(v, w) \mapsto p(v) + q(w) \quad
	\text{for $p \in \cP$, $q \in \cQ$.}
\end{align}
Explicitly, $T$ is closed if, whenever $(v_{j})_{j}$ is a net in $\dom(T)$ converging in $V$ to some $v$, such that $(T v_{j})_{j}$ converges in $W$ to some $w$, it follows that $v \in \dom(T)$ and $w = T v$.

\paragraph{Measurable functions.}
Let $(\Omega, \cF, \mu)$ be a measure space with $0 < \mu(\Omega) < \infty$ and let $V$ be a topological space equipped with its Borel $\sigma$-algebra.
A function $u \colon \Omega \to V$ is called \defterm{simple}\footnote{\label{fnote:simples}Note that the terminology and definitions used for simple and $\sigma$-simple functions ensure that they are Borel measurable, and indeed strongly measurable, which might be in doubt if the alternative terminology of ``finitely-valued'' and ``countably-valued'' functions were used.} if there exists a partition of $\Omega$ into finitely many sets $E_{1}, \dots, E_{K} \in \cF$ such that $u$ is constant on each $E_{k}$, and it is called \defterm{$\sigma$-simple}\footref{fnote:simples} if there is a countable partition with this same property.

A function $u \colon \Omega \to V$ is \defterm{$\mu$-essentially separably valued} if there exists $Z \in \cF$ with $\mu(Z) = 0$ and a countable set $C \subseteq V$ such that $f(\Omega \setminus Z) \subseteq \overline{C}$.

A function $u \colon \Omega \to V$ is called \defterm{Borel measurable} if, for each open set $A \subseteq V$, $u^{-1}(A) \in \cF$.
A function $u \colon \Omega \to V$ is called \defterm{strongly $\mu$-measurable} if it is a $\mu$-a.e.\ pointwise limit of a sequence of simple functions.
When $V$ is a topological vector space, $u$ is called \defterm{weakly $\mu$-measurable} if, for all $\ell \in V'$, $\ell u \colon \Omega \to \bK$ is strongly measurable.\footnote{From this point on, the prefix ``$\mu$-'' will be omitted from terms like measurability, essential separability, etc.}
Strong measurability implies Borel measurability, which in turn implies weak measurability.
As is well known, for a Banach space $V$, $u$ is strongly measurable if and only if it is weakly measurable and essentially separably valued (Pettis' measurability theorem; \citealp[Theorem~II.1.2]{DiestelUhl1977}).

When $V$ is a topological vector space, or indeed an \ac{LCS}, a simple function $u \colon \Omega \to V$ can be expressed as a linear combination of the form $u = \sum_{k = 1}^{K} v_{k} \one_{E_{k}}$ for some $K \in \Naturals$, $v_{1}, \dots, v_{K} \in V$, and pairwise disjoint sets $E_{1}, \dots, E_{K} \in \cF$, where $\one_{E} \colon \Omega \to \Reals$ denotes the indicator function
\begin{align*}
	\one_{E} (\omega) \defeq
	\begin{cases}
		1, & \text{if $\omega \in E$,} \\
		0, & \text{if $\omega \notin E$;}
	\end{cases}
\end{align*}
similarly, a $\sigma$-simple function $u \colon \Omega \to V$ can be expressed in the form $u = \sum_{k \in \Naturals} v_{k} \one_{E_{k}}$ for some $v_{1}, v_{2}, \dots \in V$, and pairwise disjoint $E_{1}, E_{2}, \dots \in \cF$.

\paragraph{Bochner approximation and integration.}
The \defterm{integral} of a $V$-valued simple function with respect to $\mu$ over $A \in \cF$ is unambiguously defined by
\begin{align}
	\label{eq:simple_integral}
	\int_{A} \left( \sum_{k = 1}^{K} v_{k} \one_{E_{k}} \right) \, \rd \mu \defeq \sum_{k = 1}^{K} v_{k} \mu ( A \cap E_{k} ) \in V .
\end{align}
Of course, the entire point of integration theory is to integrate non-simple functions.

A net or sequence of measurable functions $u_{j} \colon \Omega \to V$ is said to converge to $u \colon \Omega \to V$ in the \defterm{Bochner sense}, denoted $\Blim_{j} u_{j} = u$, if, for all $p \in \cP$,
\begin{align*}
	p(u_{j} - u) \to 0 \text{ a.e.\ and } \int_{\Omega} p(u_{j} - u) \, \rd \mu \to 0 .
\end{align*}
Note that uniform convergence on a full-measure subset of $\Omega$ implies Bochner convergence, but that one should be careful about referring to a \emph{topology} of Bochner convergence since, as is well known, convergence a.e.\ is not topological \citep{Ordman1966}.

A function $u \colon \Omega \to V$ is called \defterm{Bochner approximable} if there exists a sequence\footnote{Note that \citet[Definition~1.11]{BeckmannDeitmar2015} define a function to be Bochner approximable if it is a Bochner limit of a \emph{net} of simple functions, but assume a priori that the limit function is Borel measurable.
In contrast, a Bochner limit of a \emph{sequence} of simple functions is necessarily strongly measurable.} $(s_{n})_{n \in \Naturals}$ of simple functions $s_{n} \colon \Omega \to V$ such that $\Blim_{n \to \infty} s_{n} = u$.
In this situation, $u$ is called \defterm{Bochner integrable} if, for each $A \in \cF$, $( \int_{A} s_{n} \, \rd \mu )_{n}$ converges in $V$, and the \defterm{Bochner integral} of $u$ with respect to $\mu$ over $A$ is defined by
\begin{align*}
	\int_{A} u \, \rd \mu \defeq \lim_{n \to \infty} \int_{A} s_{n} \, \rd \mu .
\end{align*}

The usual arguments show that, if $\bigl( \int_{A} s_{n} \, \rd \mu \bigr)_{n}$ converges for one sequence $(s_{n})_{n}$ of simple functions that converges in the Bochner sense, then it converges for every such sequence, yielding the same limiting values for $\int_{A} u \, \rd \mu$ in each case.
It is also straightforward to show that, if $(s_{n})_{n}$ is a Bochner-convergent sequence of simple functions, then $\bigl( \int_{A} s_{n} \, \rd \mu \bigr)_{n}$ is Cauchy.
Thus, in the case of a complete space $V$, every Bochner approximable function is Bochner integrable.

Whenever $u \colon \Omega \to V$ is Bochner integrable, the integral triangle inequality holds for each continuous seminorm $p$ on $V$:
\begin{align}
	\label{eq:integral_triangle_ineq}
	p \left( \int_{A} u \, \rd \mu \right) \leq \int_{A} p(u) \, \rd \mu < \infty .
\end{align}
The property that the real-valued Lebesgue integral $\int_{\Omega} p(u) \, \rd \mu$ exists and is finite for each continuous seminorm $p$ is referred to as $u$ being \defterm{integrally bounded}.

\section{Hille's theorem for the Bochner integral in an LCS}
\label{sec:proof}

It is perhaps worth noting that Hille's theorem is almost trivial to prove for any simple integrand.
Thus, there would seem to be an obvious approach to proving Hille's theorem for Bochner-integrable functions:
one would take a sequence $(s_{n})_{n}$ of simple functions that approximate $u$ in the Bochner sense, reason that $(T s_{n})_{n}$ is a sequence of simple functions that approximates $T u$, and take the limit as $n \to \infty$ of the corresponding integrals.
Unfortunately, almost everything in this approach is wrong:
first, there is no guarantee that $s_{n}$ takes values only in $\dom(T)$;
second, even if this obstacle is resolved, there is no guarantee that the simple functions $T s_{n}$ converge to anything, let alone to $T u$, since $T$ is not continuous.

The correct way to proceed is to approximate $u$ and $T u$ independently, at least at first, and moreover to work not with a sequence of simple functions that converges a.e.\ to the approximand, but a sequence/net of $\sigma$-simple functions that converges uniformly on a set of full measure.
This trick is closely related to Egorov's theorem, which has analogues for metrisable \acp{LCS} and LF spaces \citep[Theorems~4 and 7]{deMaria1984}.
However, the following lemma establishes the required result in a different way, exploiting the relationship between essential separability and approximation using $\sigma$-simple functions when the approximand is known a priori to be Borel measurable.

\begin{lemma}[Essentially uniform $\sigma$-simple approximation of essentially separable functions]
	\label{lem:essential_separability_implies_uniform_countable_approx}
	Let $(\Omega, \cF, \mu)$ be a finite measure space, let $V$ be a uniform space, and let $u \colon \Omega \to V$ be Borel measurable and essentially separably valued.
	Let $\Lambda$ be a directed index set for a fundamental system of entourages for $V$, and hence for a neighbourhood basis of any point of $V$.
	Then there exists $Z \in \cF$ with $\mu(Z) = 0$ and a net $(\phi_{\lambda})_{\lambda \in \Lambda}$ of $\sigma$-simple functions $\phi_{\lambda} \colon \Omega \to V$ such that $u = \lim_{\lambda} \phi_{\lambda}$ uniformly on $\Omega \setminus Z$.
\end{lemma}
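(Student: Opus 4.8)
The plan is to exploit essential separability directly, since it furnishes a single null set $Z \in \cF$ together with a single countable set $C = \{ c_1, c_2, \dots \} \subseteq V$ satisfying $u(\Omega \setminus Z) \subseteq \overline{C}$, and this same $Z$ will serve the entire net, which is exactly what makes the convergence genuinely uniform on $\Omega \setminus Z$ rather than merely pointwise. Without loss of generality I would arrange that the fundamental system indexed by $\Lambda$ is handled through open, symmetric entourages: for each $\lambda$ I fix an open symmetric entourage $V_\lambda \subseteq U_\lambda$, which exists because such entourages form a base of any uniformity. Openness will be needed to convert Borel measurability of $u$ into $\cF$-measurability of the pieces of $\phi_\lambda$, and symmetry will drive the covering argument below.

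For a fixed $\lambda$, the key geometric observation is that the sections $V_\lambda[c_i] \defeq \set{v \in V}{(c_i, v) \in V_\lambda}$, $i \in \Naturals$, cover $\overline{C}$. Indeed, if $v \in \overline{C}$, then $V_\lambda[v]$ is an open neighbourhood of $v$ (it contains $v$ since the diagonal lies in $V_\lambda$) and hence meets the set $C$ whose closure contains $v$; choosing $c_i \in C \cap V_\lambda[v]$ gives $(v, c_i) \in V_\lambda$, and symmetry yields $(c_i, v) \in V_\lambda$, i.e.\ $v \in V_\lambda[c_i]$. I then disjointify by setting $B_1 \defeq u^{-1}(V_\lambda[c_1])$ and $B_i \defeq u^{-1}(V_\lambda[c_i]) \setminus \bigcup_{j < i} B_j$. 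Because $V_\lambda$ is open, each section $V_\lambda[c_i]$ is open, so Borel measurability of $u$ gives $u^{-1}(V_\lambda[c_i]) \in \cF$ and hence $B_i \in \cF$; by the covering property the $B_i$ partition $\Omega \setminus Z$. Defining $\phi_\lambda \defeq \sum_{i \in \Naturals} c_i \one_{B_i}$ on $\Omega \setminus Z$ and $\phi_\lambda \defeq c_1$ on $Z$ produces a $\sigma$-simple function, and for every $\omega \in \Omega \setminus Z$ one has $\omega \in B_i$ for a unique $i$, whence $(\phi_\lambda(\omega), u(\omega)) = (c_i, u(\omega)) \in V_\lambda \subseteq U_\lambda$.

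Finally I would verify uniform convergence of the net $(\phi_\lambda)_{\lambda \in \Lambda}$. Given any entourage $U$ of $V$, the fundamental system provides $\lambda_0 \in \Lambda$ with $U_{\lambda_0} \subseteq U$, together with the directedness convention that $\lambda \ge \lambda_0$ implies $U_\lambda \subseteq U_{\lambda_0}$. Then for all $\lambda \ge \lambda_0$ and all $\omega \in \Omega \setminus Z$ simultaneously, the previous paragraph gives $(\phi_\lambda(\omega), u(\omega)) \in V_\lambda \subseteq U_\lambda \subseteq U_{\lambda_0} \subseteq U$; note that only monotonicity of $(U_\lambda)_\lambda$ and the inclusions $V_\lambda \subseteq U_\lambda$ are used here, so the auxiliary entourages $V_\lambda$ need not themselves be monotone. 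This is precisely uniform convergence of $(\phi_\lambda)_\lambda$ to $u$ on $\Omega \setminus Z$.

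The routine points are the disjointification and the verification that each $\phi_\lambda$ is $\sigma$-simple. The only genuine subtleties — and where I would be most careful — are (i) extracting a single null set $Z$ valid for the whole net, which is exactly what essential separability delivers and what a $\lambda$-dependent construction would fail to give, and (ii) the passage to open symmetric entourages, since openness is what turns Borel measurability of $u$ into $\cF$-measurability of the $B_i$, while symmetry is what produces the covering of $\overline{C}$ by the sections $V_\lambda[c_i]$.
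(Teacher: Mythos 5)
Your proof is correct and follows essentially the same approach as the paper: use essential separability to extract a single null set $Z$ and countable set $C = \{c_k\}_{k}$, cover $\overline{C}$ by entourage sections centred at the $c_k$, disjointify, and ``round'' $u$ to the appropriate $c_k$ to obtain a $\Lambda$-indexed net converging uniformly on $\Omega \setminus Z$. In fact your explicit passage to \emph{open symmetric} entourages is slightly more careful than the paper's proof, which leaves implicit both why the sections $N_{\lambda}[c_k]$ cover $\overline{C}$ (symmetry) and why the sets $G_{\lambda,k}$ are Borel (openness).
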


\begin{remark}
	Readers unfamiliar with uniform spaces may wish to follow the proof below thinking of the case that $V$ is a metric space, $\Lambda = \Naturals$, and $N_{\lambda}[v]$ is the metric ball centred on $v$ with radius $\nicefrac{1}{\lambda}$.
	In this situation, the approximating net can be taken to be a sequence and the proof concludes with the estimate that $\sup_{\Omega \setminus Z} d(u, \phi_{\lambda}) \leq \nicefrac{1}{\lambda}$.
\end{remark}

\begin{proof}
	Let $\{ U_{\lambda} \}_{\lambda \in \Lambda}$ be a fundamental system of entourages for the uniform space $V$, so that the neighbourhoods $N_{\lambda}[v] \defeq \set{ v' \in V }{ (v, v') \in U_{\lambda} }$, $\lambda \in \Lambda$, form a neighbourhood basis for each point $v \in V$, directed by inclusion in the usual way.

	Let $Z \in \cF$ with $\mu(Z) = 0$ and $C = \{ c_{k} \}_{k \in \Naturals} \subseteq V$ be such that $u(\Omega \setminus Z) \subseteq \overline{C}$.
	For each $\lambda \in \Lambda$ and $k \in \Naturals$, let $G_{\lambda, k} \defeq N_{\lambda}[c_{k}] \setminus \bigcup_{\ell < k} N_{\lambda}[c_{\ell}]$, so that, for each $\lambda \in \Lambda$, we have a measurable, countable, pairwise disjoint covering
	\[
		\biguplus_{k \in \Naturals} G_{\lambda, k} \supseteq \overline{C} \supseteq u(\Omega \setminus Z) .
	\]
	Define $\phi_{\lambda} \colon \Omega \to V$ to take the value $c_{k}$ on the set $u^{-1}(G_{\lambda, k})$, i.e., in the case that $V$ is a topological vector space,
	\[
		\phi_{\lambda} (\omega) \defeq \sum_{k \in \Naturals} c_{k} \one_{u^{-1}(G_{\lambda, k})} (\omega) .
	\]
	(Heuristically, to define $\phi_{\lambda}(\omega)$, one considers all the neighbourhoods $N_{\lambda}[c_{k}]$ that contain $u(\omega)$, and then ``rounds'' $u(\omega)$ to the $c_{k}$ with least $k$.)

	Clearly, each $\phi_{\lambda}$ assumes at most countably many distinct values;
	each $\phi_{\lambda}$ is measurable since the sets $G_{\lambda, k}$ are measurable and hence, by the Borel measurability of $u$, each $u^{-1}(G_{\lambda, k}) \in \cF$.
	By construction, for all $\omega \in \Omega \setminus Z$, $\phi_{\lambda}(\omega) \in N_{\lambda}[u(\omega)]$, and so $\lim_{\lambda} \phi_{\lambda} = u$ uniformly on $\Omega \setminus Z$, as required.
\end{proof}

There is also a straightforward result in the converse direction:

\begin{lemma}[Essential separability of pointwise sequential limits of $\sigma$-simple functions]
	\label{lem:countable_approx_implies_essential_separability}
	Let $(\Omega, \cF, \mu)$ be a finite measure space and let $V$ be a topological space.
	If $u \colon \Omega \to V$ satisfies $u = \lim_{n \to \infty} \phi_{n}$ a.e.\ for a sequence $(\phi_{n})_{n \in \Naturals}$ of $\sigma$-simple functions $\phi_{n} \colon \Omega \to V$, then $u$ is essentially separably valued.
\end{lemma}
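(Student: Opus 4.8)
The plan is to produce a single countable set $C \subseteq V$ whose closure contains the essential range of $u$, which is exactly what the definition of essential separability demands. The natural candidate is the set of all values assumed by all of the approximating functions, namely $C \defeq \bigcup_{n \in \Naturals} \phi_{n}(\Omega)$. Since each $\sigma$-simple function $\phi_{n}$ takes at most countably many distinct values, $C$ is a countable union of countable sets and hence countable.

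Next I would isolate the null set on which convergence may fail. By hypothesis there is a set $Z \in \cF$ with $\mu(Z) = 0$ such that $\phi_{n}(\omega) \to u(\omega)$ in $V$ for every $\omega \in \Omega \setminus Z$. Fixing such an $\omega$, the point $u(\omega)$ is the limit of the sequence $(\phi_{n}(\omega))_{n}$, each term of which lies in $C$ by construction.

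The key step is then the elementary topological fact that, in any topological space, the closure of a set contains every limit of a sequence drawn from that set: each open neighbourhood of $u(\omega)$ must contain $\phi_{n}(\omega)$ for all sufficiently large $n$, hence meets $C$, so that $u(\omega) \in \overline{C}$. Letting $\omega$ range over $\Omega \setminus Z$ yields $u(\Omega \setminus Z) \subseteq \overline{C}$, and since $C$ is countable and $\mu(Z) = 0$, this is precisely the statement that $u$ is essentially separably valued.

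As for obstacles, there is essentially none of substance: the argument is a direct verification of the definition. The only points requiring any care are, first, that countability of $C$ relies on the index set being the sequence $\Naturals$ rather than an arbitrary net --- an uncountable net of $\sigma$-simple functions could collectively assume uncountably many values, so the hypothesis genuinely needs sequences here, in contrast to \Cref{lem:essential_separability_implies_uniform_countable_approx}; and, second, that sequential limits always lie in the topological closure, so that no first-countability or Hausdorff hypothesis on $V$ is needed for the final inclusion.
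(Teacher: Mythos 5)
Your proposal is correct and follows essentially the same route as the paper's proof: the same countable set $C = \bigcup_{n \in \Naturals} \phi_{n}(\Omega)$, the same exceptional null set $Z$, and the same observation that sequential limits lie in $\overline{C}$ without any separation or countability hypothesis on $V$. The only difference is cosmetic: where you assert that a countable union of countable sets is countable, the paper explicitly flags that this step uses the Axiom of Countable Choice.
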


\begin{proof}
	Let $Z \in \cF$ with $\mu(Z) = 0$ be such that $u(\omega) = \lim_{n \to \infty} \phi_{n}(\omega)$ for all $\omega \in \Omega \setminus Z$.
	Let $C \defeq \set{ \phi_{n}(\omega) }{ n \in \Naturals, \omega \in \Omega }$;
	the hypotheses on $\phi_{n}$ and the Axiom of Countable Choice imply that $C$ is a countable set.
	Also, for every $\omega \in \Omega \setminus Z$, $u(\omega) = \lim_{n \to \infty} \phi_{n}(\omega) \in \overline{C}$.
	Thus, $u$ is essentially separably valued.
\end{proof}

Combining \Cref{lem:essential_separability_implies_uniform_countable_approx,lem:countable_approx_implies_essential_separability} immediately yields the following:

\begin{corollary}[Essentially uniform $\sigma$-simple approximation of strongly measurable functions]
	\label{cor:ess_unif_sigma_simple_approx_of_strongly_measurable}
	Let $u \colon \Omega \to V$ be a strongly measurable function with values in an \ac{LCS} $V$ with a neighbourhood basis of the origin indexed by $\Lambda$.
	Then there exists $Z \in \cF$ with $\mu(Z) = 0$ and a net $(\phi_{\lambda})_{\lambda \in \Lambda}$ of $\sigma$-simple functions $\phi_{\lambda} \colon \Omega \to V$ such that, for every continuous seminorm $p$ on $V$,
	\[
		\lim_{\lambda} \sup_{\omega \in \Omega \setminus Z} p ( u(\omega) - \phi_{\lambda}(\omega) ) = 0 .
	\]
\end{corollary}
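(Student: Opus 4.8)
The plan is to verify that a strongly measurable $u$ meets the hypotheses of \Cref{lem:essential_separability_implies_uniform_countable_approx}, and then to translate that lemma's abstract uniform-space conclusion into the concrete seminorm statement. First I would record that strong measurability supplies both ingredients needed. By definition, $u$ is the a.e.\ pointwise limit of a sequence $(s_{n})_{n}$ of simple functions; since every simple function is in particular $\sigma$-simple, \Cref{lem:countable_approx_implies_essential_separability} applies and shows that $u$ is essentially separably valued. Moreover, as noted in \Cref{sec:background}, strong measurability implies Borel measurability. Hence $u$ is a Borel measurable, essentially separably valued function into the \ac{LCS} $V$.

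Next I would regard $V$ as a uniform space in the canonical way. An \ac{LCS} carries an additive uniformity whose entourages are generated by the sets $\set{(v, v')}{p(v - v') < \epsilon}$ over continuous seminorms $p$ and $\epsilon > 0$; because this uniformity is translation-invariant, a fundamental system of entourages is obtained from any neighbourhood basis $\{ N_{\lambda} \}_{\lambda \in \Lambda}$ of the origin by setting $U_{\lambda} \defeq \set{(v, v')}{v - v' \in N_{\lambda}}$. Applying \Cref{lem:essential_separability_implies_uniform_countable_approx} with this index set $\Lambda$ then yields a null set $Z \in \cF$ and a net $(\phi_{\lambda})_{\lambda \in \Lambda}$ of $\sigma$-simple functions with $u = \lim_{\lambda} \phi_{\lambda}$ uniformly on $\Omega \setminus Z$; concretely, the construction in that lemma gives $\phi_{\lambda}(\omega) - u(\omega) \in N_{\lambda}$ for every $\omega \in \Omega \setminus Z$.

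Finally I would convert uniform convergence in the uniformity into the claimed seminorm estimate. Fix a continuous seminorm $p$ on $V$ and some $\epsilon > 0$. The set $\set{v \in V}{p(v) < \epsilon}$ is a neighbourhood of the origin, so it contains some basic neighbourhood $N_{\lambda_{0}}$. Since the $N_{\lambda}$ are directed downward by inclusion, for every $\lambda \ge \lambda_{0}$ and every $\omega \in \Omega \setminus Z$ one has $\phi_{\lambda}(\omega) - u(\omega) \in N_{\lambda} \subseteq N_{\lambda_{0}}$, whence $p(u(\omega) - \phi_{\lambda}(\omega)) < \epsilon$ and therefore $\sup_{\omega \in \Omega \setminus Z} p(u(\omega) - \phi_{\lambda}(\omega)) \le \epsilon$. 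As $\epsilon > 0$ was arbitrary, the displayed limit vanishes.

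I do not anticipate any genuine obstacle, since the corollary is essentially a bookkeeping combination of the two preceding lemmas together with the background fact that strong measurability entails Borel measurability. The only point demanding care is the bridge between the uniform-space formulation of \Cref{lem:essential_separability_implies_uniform_countable_approx} and the seminorm formulation sought here — namely checking that a neighbourhood basis of the origin does index a fundamental system of entourages for the \ac{LCS} uniformity, which is exactly where translation-invariance of that uniformity is used.
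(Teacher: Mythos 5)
Your proposal is correct and follows exactly the route the paper intends: the paper proves this corollary simply by ``combining'' \Cref{lem:essential_separability_implies_uniform_countable_approx,lem:countable_approx_implies_essential_separability} (strong measurability gives Borel measurability plus, via \Cref{lem:countable_approx_implies_essential_separability}, essential separability, so \Cref{lem:essential_separability_implies_uniform_countable_approx} applies with the canonical translation-invariant uniformity of the \ac{LCS}). Your write-up merely makes explicit the bookkeeping the paper leaves implicit, including the correct translation from entourage-uniform convergence to the seminorm formulation.
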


With the above ingredients in place, the proof of Hille's theorem proceeds much as in the Banach space case, as done by e.g.\ \citet[Theorem~II.2.6]{DiestelUhl1977}.

\begin{proof}[Proof of \Cref{thm:Hille}]
	\textit{\textsf{Step 1: Approximation of $u$ and $T u$.}}
	Since $u$ and $T u$ are Bochner integrable, there are sequences $(s_{n})_{n \in \Naturals}$ and $(t_{n})_{n \in \Naturals}$ of simple functions $s_{n} \colon \Omega \to V$ and $t_{n} \colon \Omega \to W$ such that $\Blim_{n} s_{n} = u$ and $\Blim_{n} t_{n} = T u$.
	For each $n \in \Naturals$, $(s_{n}, t_{n}) \colon \Omega \to V \oplus W$ is again a simple function, and it is easy to verify that $\Blim_{n} (s_{n}, t_{n}) = (u, T u)$ with respect to the seminorms \eqref{eq:graph_seminorms_V_oplus_W}.
	Hence, $(u, T u)$ is Bochner approximable.
	However, we still face the issue that, although $(u, T u)$ takes values in $\graph(T)$, we do not yet have a $\graph(T)$-valued approximating sequence;
	there is no guarantee that $s_{n}$ takes values in $\dom(T)$, nor that $T s_{n} = t_{n}$.

	Let $\Lambda$ be a directed set indexing a neighbourhood basis of the origin in $V \oplus W$.
	By \Cref{cor:ess_unif_sigma_simple_approx_of_strongly_measurable}, there exists $Z_{1} \in \cF$ with $\mu(Z_{1}) = 0$ and a net $(\phi_{\lambda}, \psi_{\lambda})_{\lambda \in \Lambda}$ of $\sigma$-simple functions $(\phi_{\lambda}, \psi_{\lambda}) \colon \Omega \to V \oplus W$ such that $\lim_{\lambda} (\phi_{\lambda}, \psi_{\lambda}) = (u, T u)$ uniformly on $\Omega \setminus Z_{1}$.
	There also exists $Z_{2} \in \cF$ with $\mu(Z_{2}) = 0$ such that $u(\omega) \in \dom(T)$ for all $\omega \in \Omega \setminus Z_{2}$.
	Let $Z \defeq Z_{1} \cup Z_{2}$.
	Thus, for each $\lambda \in \Lambda$, there is a countable partition\footnote{Of course, the partition into non-empty level sets of $(\phi_{\lambda}, \psi_{\lambda})$ could actually be a finite one, but for the sake of notational simplicity this case will be subsumed into the countably infinite case without further comment.} $\Omega \setminus Z = \biguplus_{k \in \Naturals} E_{\lambda, k}$ into non-empty $\cF$-measurable sets $E_{\lambda, k}$ such that $(\phi_{\lambda}, \psi_{\lambda})$ is constant on each $E_{\lambda, k}$.
	Let $\omega_{\lambda, k} \in E_{\lambda, k}$ be arbitrary and define a $\sigma$-simple function $u_{\lambda} \colon \Omega \to \dom(T) \subseteq V$ by
	\begin{align}
		\label{eq:Hille_Bochner_approx_un}
		u_{\lambda} \defeq \sum_{k \in \Naturals} u(\omega_{\lambda, k}) \one_{E_{\lambda, k}} .
	\end{align}
	Note that, by construction, $u_{\lambda} = 0$ on $Z$.
	Note also that $T u_{\lambda} \defeq \sum_{k \in \Naturals} T u(\omega_{\lambda, k}) \one_{E_{\lambda, k}}$ is well defined and that $u_{\lambda} \to u$ and $T u_{\lambda} \to T u$ essentially uniformly:
	for all $p \in \cP$ and $q \in \cQ$,
	\begin{align}
		\label{eq:un_to_u_essunif}
		\gamma_{\lambda, p} \defeq \sup_{\Omega \setminus Z} p ( u - u_{\lambda} ) \leq 2 \sup_{\Omega \setminus Z} p ( u - \phi_{\lambda} ) \to 0 , \\
		\label{eq:Tun_to_Tu_essunif}
		\Gamma_{\lambda, q} \defeq \sup_{\Omega \setminus Z} q ( T u - T u_{\lambda} ) \leq 2 \sup_{\Omega \setminus Z} q ( T u - \psi_{\lambda} ) \to 0 .
	\end{align}

	\textit{\textsf{Step 2: Integration of approximants $u_{\lambda}$ and $T u_{\lambda}$.}}
	Fix $\lambda \in \Lambda$.
	By construction, $u_{\lambda}$ is essentially separable and strongly measurable:
	the simple partial sums $u_{\lambda, K} \defeq \sum_{k = 1}^{K} u(\omega_{\lambda, k}) \one_{E_{\lambda, k}}$ of the series \eqref{eq:Hille_Bochner_approx_un} converge pointwise to $u_{\lambda}$ as $K \to \infty$.
	Consider any $p \in \cP$.
	Integrating the estimate
	\begin{align*}
		p ( u(\omega_{\lambda, k}) )
		\leq p ( u(\omega_{\lambda, k}) - u(\omega) ) + p ( u(\omega) )
		\leq \gamma_{\lambda, p} + p ( u(\omega) ) \text{ for $\omega \in E_{\lambda, k}$}
	\end{align*}
	over $E_{\lambda, k}$ and then summing over $k$ shows that
	\[
		\sum_{k \in \Naturals} p ( u(\omega_{\lambda, k}) ) \mu ( E_{\lambda, k} ) \leq \gamma_{\lambda, p} \mu(\Omega) + \int_{\Omega} p(u) \, \rd \mu < \infty .
	\]
	Lebesgue's dominated convergence theorem then yields that
	\[
		\int_{\Omega} p ( u_{\lambda} - u_{\lambda, K} ) \, \rd \mu
		=
		\int_{\Omega} p \left(\sum_{k > K} u(\omega_{\lambda, k}) \one_{E_{\lambda, k}} \right) \, \rd \mu
		\leq
		\sum_{k > K} p ( u(\omega_{\lambda, k}) ) \mu ( E_{\lambda, k} ) \to 0 \text{ as $K \to \infty$.}
	\]
	Hence, $\Blim_{K \to \infty} u_{\lambda, K} = u_{\lambda}$, which shows that $u_{\lambda}$ is Bochner approximable;
	by the completeness of $V$, $u_{\lambda}$ is Bochner integrable.
	A similar argument shows that $T u_{\lambda}$ is Bochner integrable and, indeed, that $(u_{\lambda}, T u_{\lambda})$ is Bochner integrable.

	We now show that Hille's theorem holds for each of these approximants.
	Consider the partial sums $u_{\lambda, K}$, each with Bochner integral with respect to $\mu$ over $A \in \cF$ given by \eqref{eq:simple_integral}, i.e.
	\[
		\int_{A} u_{\lambda, K} \, \rd \mu = \sum_{k = 1}^{K} u(\omega_{\lambda, k}) \mu ( A \cap E_{\lambda, k} ) \in \dom(T) .
	\]
	By the previous paragraph, $\bigl( \int_{A} u_{\lambda, K} \, \rd \mu \bigr)_{K \in \Naturals}$ is a sequence in $\dom(T)$ that converges in $V$ to $\int_{A} u_{\lambda} \, \rd \mu$ as $K \to \infty$.
	Also, for each $K \in \Naturals$,
	\[
		T \left( \int_{A} u_{\lambda, K} \, \rd \mu \right)
		= T \sum_{k = 1}^{K} u(\omega_{\lambda, k}) \mu ( A \cap E_{\lambda, k} )
		= \sum_{k = 1}^{K} T u(\omega_{\lambda, k}) \mu ( A \cap E_{\lambda, k} )
		= \int_{A} T u_{\lambda, K} \, \rd \mu ,
	\]
	and this converges in $W$ to $\int_{A} T u_{\lambda} \, \rd \mu$ as $K \to \infty$.
	Therefore, since $T$ is a closed operator,
	\begin{align}
		\label{eq:Hille_for_approx}
		\int_{A} u_{\lambda} \, \rd \mu \in \dom(T)
		\quad
		\text{and}
		\quad
		\int_{A} T u_{\lambda} \, \rd \mu = T \int_{A} u_{\lambda} \, \rd \mu .
	\end{align}

	\textit{\textsf{Step 3: Integration of $u$ and $T u$.}}
	For each $p \in \cP$, the triangle inequality \eqref{eq:integral_triangle_ineq} and the essentially uniform convergence of $u_{\lambda}$ to $u$ \eqref{eq:un_to_u_essunif} imply that
	\[
		p \left( \int_{A} u \, \rd \mu - \int_{A} u_{\lambda} \, \rd \mu \right)
		\leq
		\int_{A} p(u - u_{\lambda}) \, \rd \mu
		\leq
		\gamma_{\lambda, p} \mu(A)
		\to
		0 ;
	\]
	similarly, for each $q \in \cQ$,
	\begin{align*}
		q \left( \int_{A} T u \, \rd \mu - T \int_{A} u_{\lambda} \, \rd \mu \right)
		& =
		q \left( \int_{A} T u \, \rd \mu - \int_{A} T u_{\lambda} \, \rd \mu \right) & & \text{by \eqref{eq:Hille_for_approx}} \\
		& \leq
		\int_{A} q(T u - T u_{\lambda}) \, \rd \mu & & \text{by \eqref{eq:integral_triangle_ineq}} \\
		& \leq
		\Gamma_{\lambda, q}
		\to
		0 & & \text{by \eqref{eq:Tun_to_Tu_essunif}.}
	\end{align*}
	Thus, $\bigl( \int_{A} u_{\lambda} \, \rd \mu \bigr)_{\lambda}$ is a net in $\dom(T)$ that converges in $V$ to $\int_{A} u \, \rd \mu$, and $\bigl( T \int_{A} u_{\lambda} \, \rd \mu \bigr)_{\lambda}$ converges in $W$ to $\int_{A} T u \, \rd \mu$.
	Since $T$ is closed,
	\begin{align*}
		\int_{A} u \, \rd \mu \in \dom(T)
		\quad
		\text{and}
		\quad
		\int_{A} T u \, \rd \mu = T \int_{A} u \, \rd \mu ,
	\end{align*}
	i.e.\ \eqref{eq:Hille} holds, completing the proof.
\end{proof}

\begin{remark}
	\label{rmk:alternative}
	Some readers may prefer to view the steps in the above proof of \Cref{thm:Hille} slightly differently:

	Step~1 constructs a net $(u_{\lambda}, T u_{\lambda})_{\lambda \in \Lambda}$ of $\graph(T)$-valued $\sigma$-simple approximations to $(u, T u)$ that converge essentially uniformly to $(u, T u)$ with respect to the seminorms defined in \eqref{eq:graph_seminorms_V_oplus_W}.

	Step~2 verifies that each term in this net is Bochner integrable and shows that the sequence $\bigl( \int_{A} u_{\lambda, K} \, \rd \mu , \int_{A} T u_{\lambda, K} \, \rd \mu \bigr)_{K \in \Naturals}$ is a convergent sequence with all its terms in the closed set $\graph(T)$;
	hence its limit as $K \to \infty$, namely $\bigl( \int_{A} u_{\lambda} \, \rd \mu , \int_{A} T u_{\lambda} \, \rd \mu \bigr)$, belongs to $\graph(T)$ as well.

	Step~3 employs the same closedness argument to show that the limit with respect to $\lambda \in \Lambda$, namely $\bigl( \int_{A} u \, \rd \mu , \int_{A} T u \, \rd \mu \bigr)$, also belongs to $\graph(T)$, which is just another way of formulating the claim \eqref{eq:Hille} of Hille's theorem.
\end{remark}

\section{Integration by seminorm}
\label{sec:by_seminorm}

Integrability by seminorm appears to have been introduced by \citet{GarnirDeWildeSchmets1972} and then developed further by \citet{Blondia1981};
see \citet{Lewis2022} for a recent survey and further references.
Although essential separability by seminorm, measurability by seminorm, and integrability by seminorm agree with the usual notions of essential separability, strong measurability, and strong integrability when $V$ is a Banach space, they do differ in general, and it is instructive to briefly review the concepts and see how the proof of Hille's theorem breaks down in the ``by seminorm'' setting.

A function $u \colon \Omega \to V$ is \defterm{integrable by seminorm} if, for each $p \in \cP$, there exists a sequence $(s_{n}^{p})_{n \in \Naturals}$ of simple functions $s_{n}^{p} \colon \Omega \to V$ such that
\begin{enumerate}[label=(\alph*)]
	\item $\lim_{n \to \infty} p(s_{n}^{p} - u) = 0$ pointwise on $\Omega \setminus Z_{p}$, where $\mu(Z_{p}) = 0$, i.e.\ $u$ is \defterm{measurable by seminorm};
	\item $\lim_{n \to \infty} \int_{\Omega} p(s_{n}^{p} - u) \, \rd \mu = 0$;
\end{enumerate}
and there exists, for each $A \in \cF$, some element $\int_{A} u \, \rd \mu \in V$ such that, for all $p \in \cP$,
\[
	\lim_{n \to \infty} p \left( \int_{A} s_{n}^{p} \, \rd \mu - \int_{A} u \, \rd \mu \right) = 0 .
\]

Borel measurability always implies measurability by seminorm;
if the space $V$ is metrisable, then measurability by seminorm implies strong measurability and hence Borel measurability \citep[Proposition~2.4]{Lewis2022}.
There is also a version of Pettis' measurability theorem for the ``by seminorm'' setting:
measurability by seminorm is equivalent to weak measurability together with essential separability by seminorm\footnote{A function $u \colon \Omega \to V$ is \defterm{essentially separably valued by seminorm} if, for each $p \in \cP$, there exists $Z_{p} \in \cF$ with $\mu(Z_{p}) = 0$ and a countable set $C_{p} \subseteq V$ such that $u ( \Omega \setminus Z_{p} )$ is contained within the closure of $C_{p}$ with respect to the seminorm $p$.
Confusingly, \citet[Definition~1.4]{BeckmannDeitmar2015} refer to essential separability by seminorm as essential separability.
The two concepts are not at all equivalent:
both $\bigcup_{p \in \cP} Z_{p} \subseteq \Omega$ and $\bigcup_{p \in \cP} C_{p} \subseteq V$ may be very large sets.} \citep[Theorem~2.3]{Lewis2022}.

If $V$ is complete, then integrability by seminorm is equivalent to measurability by seminorm together with integral boundedness (\citealp[Theorem~2.10]{Blondia1981}; \citealp[Theorem~2.7]{Lewis2022}).

Crucially, when defining integrability by seminorm, both the approximating sequence of simple functions $(s_{n}^{p})_{n}$ and the exceptional $\mu$-null set $Z_{p} \in \cF$ on which $p(s_{n}^{p} - u) \not\to 0$ may both depend on $p$.
If $V$ is not first countable, then it may be the case that $\bigcup_{p \in \cP} Z_{p} \notin \cF$ or that it has positive outer measure.
There is also no guarantee that
\[
	\lim_{n \to \infty} \int_{\Omega} \widetilde{p}(s_{n}^{p} - u) \, \rd \mu = 0
\]
for distinct continuous seminorms $p, \widetilde{p} \in \cP$.
It is for this reason that the above proof of Hille's theorem for the strong/Bochner integral fails for integration by seminorm:
the analogue of Step~1 would produce a family of a approximating nets $\bigl( \phi_{\lambda}^{(p, q)}, \psi_{\lambda}^{(p, q)} \bigr)_{\lambda \in \Lambda}$, one for each $p \in \cP$ and $q \in \cQ$, each converging essentially uniformly to $(u, T u)$ only with respect to those two seminorms $p$ and $q$.
Combining all these nets to form a single approximation $u_{\lambda}$, as in \eqref{eq:Hille_Bochner_approx_un}, that provides essentially uniform approximation of $u$ in $V$ (and of $T u$ in $W$), would seem to be challenging if not impossible.
Without this, Step~3 would fail, because one would not know, e.g., that $\bigl( \int_{A} u_{\lambda} \, \rd \mu \bigr)_{\lambda}$ converges in $V$, and hence there would be no scope to appeal to the closedness of the operator $T$.

\section*{Acknowledgements}
\addcontentsline{toc}{section}{Acknowledgements}

This note was inspired by a question posed by M.\ Lange--Hegermann, for which the author expresses his thanks.
The author also thanks H.\ Lambley and an anonymous peer reviewer for helpful comments on the manuscript.

For the purpose of open access, the author has applied a Creative Commons Attribution (CC~BY) licence to any Author Accepted Manuscript version arising.
No data were created or analysed in this study.

\bibliographystyle{abbrvnat}
\bibliography{references}
\addcontentsline{toc}{section}{References}

\end{document}